\newtheorem{theorem}{Theorem}[section]
\newtheorem{lemma}{Lemma}[section]
\newtheorem{definition}{Definition}[section]
\newtheorem{remark}{Remark}[section]
\newcommand{\bal}{\begin{align}}
\newcommand{\bbal}{\begin{align*}}
\newcommand{\beq}{\begin{equation}}
\newcommand{\eeq}{\end{equation}}
\newcommand{\bca}{\begin{cases}}
\newcommand{\eca}{\end{cases}}
\newcommand{\pa}{\partial}
\newcommand{\fr}{\frac}
\newcommand{\De}{\Delta}
\newcommand{\dd}{\mathrm{d}}
\newcommand{\R}{\mathbb{R}}
\newcommand{\les}{\lesssim}
\newcommand\f{\left}
\newcommand\g{\right}
\begin{document}
\bibliographystyle{plain}
\title{Ill-posedness for the periodic Camassa--Holm type equations in the end-point critical Besov space $B^{1}_{\infty,1}$}

\author{Jinlu Li$^{1}$, Yanghai Yu$^{2,}$\footnote{E-mail: lijinlu@gnnu.edu.cn; yuyanghai214@sina.com(Corresponding author); mathzwp2010@163.com} and Weipeng Zhu$^{3}$\\
\small $^1$ School of Mathematics and Computer Sciences, Gannan Normal University, Ganzhou 341000, China\\
\small $^2$ School of Mathematics and Statistics, Anhui Normal University, Wuhu 241002, China\\
\small $^3$ School of Mathematics and Big Data, Foshan University, Foshan, Guangdong 528000, China}

\date{\today}

\maketitle\noindent{\hrulefill}

{\bf Abstract:} For the real-line case, it is shown that both the Camassa--Holm \cite{Guo} and Novikov equations \cite{Li-arx} are ill-posed in $B_{\infty,1}^{1}$. In this paper, by presenting a new construction of initial data which leads to the norm inflation phenomena, we prove that both the periodic Camassa--Holm and Novikov equations are also ill-posed in $B_{\infty,1}^{1}$.

{\bf Keywords:} Camassa--Holm and Novikov equation; Ill-posedness; Besov space.

{\bf MSC (2010):} 35B35, 37K10.
\vskip0mm\noindent{\hrulefill}

\thispagestyle{empty}
\section{Introduction}
In this paper, we consider the question of the well-posedness of the Cauchy problem
to a class of shallow water wave equations on the torus that containing the Camassa--Holm and Novikov equation. In order to elucidate the main ideas, our attention in this paper will be focused on the Camassa--Holm (CH) equation, which takes the form:
\begin{equation}\label{0}
\begin{cases}
u_t-u_{xxt}+3uu_x=2u_xu_{xx}+uu_{xxx},\quad (x,t)\in \mathbb{T}\times\R^+,\\
u(x,t=0)=u_0(x).
\end{cases}
\end{equation}
Here $\mathbb{T}=\R/2\pi\mathbb{Z}$, the scalar function $u = u(t, x)$ stands for the fluid velocity at time $t\geq0$ in the $x$ direction.
We can transform the CH equation equivalently into the following transport type equation
\begin{equation}\label{CH}
\begin{cases}
\partial_tu+u\pa_xu=-\pa_x\Lambda^{-2}\Big(u^2+\fr12(\pa_xu)^2\Big), \\
\Lambda^{-2}=\f(1-\pa^2_x\g)^{-1},\\
u(x,t=0)=u_0(x).
\end{cases}
\end{equation}
The CH equation appeared initially in the context of hereditary symmetries studied by Fuchssteiner and Fokas in \cite{Fokas} and then was derived explicitly as a water wave equation by Camassa and Holm \cite{Camassa}. Many aspects of the mathematical beauty of the CH equation have been exposed over the last two decades. Particularly, (CH) is completely integrable \cite{Camassa,Constantin-P} with a bi-Hamiltonian structure \cite{Constantin-E,Fokas} and infinitely many conservation laws \cite{Camassa,Fokas}. Also, it admits exact peaked
soliton solutions (peakons) of the form $u(x,t)=ce^{-|x-ct|}$ with $c>0,$ which are orbitally stable \cite{Constantin.Strauss}. Another remarkable feature of the CH equation is the wave breaking phenomena: the solution remains bounded while its slope becomes unbounded in finite time \cite{Constantin,Escher2,Escher3}. It is worth mentioning that the peaked solitons present the characteristic for the travelling water waves of greatest height and largest amplitude and arise as solutions to the free-boundary problem for incompressible Euler equations over a flat bed, see Refs. \cite{Constantin-I,Escher4,Escher5,Toland} for the details.
Due to these interesting and remarkable features, the CH equation has attracted much attention as a class of integrable shallow water wave equations in recent twenty years. Its systematic mathematical study was initiated in a series of papers by Constantin and Escher, see \cite{Escher1,Escher2,Escher3,Escher4,Escher5}.

Firstly, we recall the notion of well-posedness in the sense of Hadamard. We say that the Cauchy problem \eqref{0} is Hadamard (locally) well-posed in a Banach space $X$ if for any data $u_0\in X$ there exists (at least for a short time) $T>0$ and a unique solution in the space $\mathcal{C}([0,T),X)$ which depends continuously on the data. In particular, we say that the solution map is
continuous if for any $u_0\in X$, there exists a neighborhood $B \subset X$ of $u_0$ such that
for every $u \in B$ the map $u \mapsto U$ from $B$ to $\mathcal{C}([0, T]; X)$ is continuous, where $U$
denotes the solution to \eqref{0} with initial data $u_0$.
After the CH equation was derived physically in the context of water waves, there are
a large amount of literatures devoted to studying the well-posedness of the Cauchy problem \eqref{0} (see
Molinet's survey \cite{Molinet}). Particularly, the continuous dependence is rather important when PDEs are used to model
phenomena in the natural world since measurements are always associated with errors. Next we recall some progresses in this field.

{\bf Well-posedness.} Li and Olver \cite{li2000} proved that the Cauchy problem \eqref{0} is locally well-posed with the initial data $u_0\in H^s(\R)$ with $s > 3/2$ (see also \cite{GB}). Danchin \cite{d1} proved
the local existence and uniqueness of strong solutions to \eqref{0} with initial data in $B^s_{p,r}$ for $s > \max\{1 + 1/p , 3/2\}$ with $p\in[1,\infty]$ and $r\in[1,\infty)$. However, he \cite{d1} only obtained the continuity of the solution map of \eqref{0} with respect to the initial data in the space $\mathcal{C}([0, T ];B^{s'}_{p,r})$ with any $s'<s$. Li-Yin \cite{Li-Yin1} proved the continuity of the solution map of \eqref{0} with respect to the initial data in the space $\mathcal{C}([0, T];B^{s}_{p,r})$ with $r<\infty$. In particular, they \cite{Li-Yin1} proved that the solution map of \eqref{0} is weak continuous with respect to initial data $u_0\in B^s_{p,\infty}$. For the end-point critical case, Danchin \cite{d3} obtained the local well-posedness in the space $B^{3/2}_{2,1}$. Recently, Ye-Yin-Guo \cite{Ye} proved the uniqueness and continuous dependence of the Camassa--Holm type equations in critical Besov spaces $B^{1+1/p}_{p,1}$ with $p\in[1,\infty)$.

{\bf Ill-posedness.} When considering further continuous dependence, we proved the non-uniform dependence on initial data for \eqref{0} under both the framework of Besov spaces $B^s_{p,r}$ for $s>\max\big\{1+1/p, 3/2\big\}$ with $p\in[1,\infty], r\in[1,\infty)$ and $B^{3/2}_{2,1}$ in \cite{Li1, Li2} (see \cite{H-M,H-K,H-K-M} for earlier results in $H^s$). Danchin \cite{d3} obtained the ill-posedness of \eqref{0} in $B^{3/2}_{2,\infty}$ (the data-to-solution map is not continuous by using peakon solution). Byers \cite{Byers} proved that the Camassa--Holm equation is ill-posed in $H^s$ for $s < 3/2$ in the sense of norm inflation, which means that $H^{3/2}$ is the critical Sobolev space for well-posedness. In our recent papers \cite{Li22,Li22-jee}, we established the ill-posedness for the CH equation in $B^s_{p,\infty}(\mathbb{R})$ by proving the solution map to \eqref{0} starting from $u_0$ is discontinuous at $t = 0$ in the metric of $B^s_{p,\infty}(\mathbb{R})$. Moreover, for the real-line and torus cases, Guo-Liu-Molinet-Yin \cite{Guo-Yin} showed that the CH equation is ill-posed in $B_{p,r}^{1+1/p}(\mathbb{R}\;\text{or}\; \mathbb{T})$ with $(p,r)\in[1,\infty]\times(1,\infty]$ (especially in $H^{3/2}$) by proving the norm inflation. Particularly, for the end-point case $(p,r)=(\infty,1)$, Guo-Ye-Yin \cite{Guo} considered the real-line case and obtained the ill-posedness for the CH equation in $B^{1}_{\infty,1}(\R)$ by proving the norm inflation. We are concerned with the following natural and interesting question: Whether or not the periodic CH equation is ill-posed in $B^{1}_{\infty,1}(\mathbb{T})$? To the best of our knowledge, this is still an open problem. We shall solve this problem and present the negative result in this paper.
We can now state our main result as follows.
\begin{theorem}\label{th1}
For any $n\in \mathbb{Z}^+$ large enough, there exist $u_0$ with
\bbal
\|u_0\|_{B^1_{\infty,1}(\mathbb{T})}\leq \frac{1}{\log\log n}
\end{align*}
such that if we denote by $u\in \mathcal{C}([0,T);H^{\infty}(\mathbb{T}))$, the solution of the period Camassa--Holm equation with initial data $u_0$, then
\bbal
\|u(t_0)\|_{B^1_{\infty,1}(\mathbb{T})}\geq {\log\log n}\quad\text{with}\quad t_0\in \left(0,\frac{1}{\log n}\right].
\end{align*}
\end{theorem}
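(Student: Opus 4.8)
My plan is to prove norm inflation by reducing the nonlinear flow to its first Picard iterate and forcing the quadratic transport interaction to be large in $B^1_{\infty,1}(\mathbb T)$ while the datum stays small. Writing \eqref{CH} in integrated form gives
\bbal
u(t)=u_0-t\,\Big(u_0\pa_xu_0+\pa_x\Lambda^{-2}\big(u^2_0+\tfrac12(\pa_xu_0)^2\big)\Big)+R(t),
\end{align*}
and the guiding principle is that the \emph{local} term $u_0\pa_xu_0=\tfrac12\pa_x(u_0^2)$ is the only part able to create large high-frequency content in the critical norm: the nonlocal term carries the smoothing factor $\Lambda^{-2}$ and is thus one derivative milder. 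Accordingly it suffices to construct $u_0$ with $\|u_0\|_{B^1_{\infty,1}}\les(\log\log n)^{-1}$ but $\big\|\tfrac12\pa_x(u_0^2)\big\|_{B^1_{\infty,1}}\gtrsim(\log n)\log\log n$, and then to pick $t_0\in(0,1/\log n]$ so that the term linear in $t$ dominates, provided $R(t_0)$ is negligible.

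The construction exploits the mismatch between the $B^1_{\infty,1}$ weight and the frequency gained by differentiating a product. For a single high--low pair $a\cos(\mu x)+b\cos(Nx)$ with $1\le\mu\ll N$, the cross term of $\tfrac12\pa_x(u_0^2)$ is, up to constants, $abN\big[\sin((N+\mu)x)+\sin((N-\mu)x)\big]$, which lives at frequency $\approx N$ and therefore contributes $\approx abN^2$ to the critical norm, whereas the pair itself costs only $a\mu+bN$. Taking $a\approx\mu^{-1}(\log\log n)^{-1}$ and $b\approx N^{-1}(\log\log n)^{-1}$ keeps the cost $\approx(\log\log n)^{-1}$ while the gain scales like the frequency ratio $N/\mu$, so a sufficiently large separation already inflates formally. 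To realize this robustly I would take $u_0$ to be a lacunary superposition of such pairs over scales $N_1\ll\cdots\ll N_m$; since the output bands near the distinct $N_k$ do not overlap, the $\ell^1$ structure of $\|\cdot\|_{B^1_{\infty,1}}$ prevents cancellation, and the amplitudes and the number of scales are tuned against $t_0\le 1/\log n$ to reach the stated double-logarithmic rates.

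The main obstacle, and the genuine difference from the real-line argument of \cite{Guo}, is controlling $R(t)$ directly in the critical space, where the transport nonlinearity formally loses one derivative and so cannot be treated as a bounded forcing. I would (i) invoke the critical well-posedness of \cite{Ye} to produce $u\in\mathcal C([0,T);H^\infty(\mathbb T))$ with lifespan $T\gtrsim\|u_0\|^{-1}_{B^1_{\infty,1}}\gtrsim\log\log n$, comfortably beyond $t_0\le 1/\log n$; (ii) write $R(t)=\int_0^t\big(F(u(s))-F(u_0)\big)\,\dd s$ with $F(u)=-u\pa_xu-\pa_x\Lambda^{-2}\big(u^2+\tfrac12(\pa_xu)^2\big)$, and estimate it through Danchin's transport and commutator inequalities in Besov spaces together with the uniform bound on $\|u\|_{B^1_{\infty,1}}$ over $[0,t_0]$; and (iii) observe that the $\Lambda^{-2}$ part is one derivative smoother and contributes only $O(\|u_0\|^2_{B^1_{\infty,1}})$. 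The delicate point is (ii): the derivative loss must be paid for by the shortness of $t_0$ and by the frequency separation hard-wired into $u_0$, so that $\|R(t_0)\|_{B^1_{\infty,1}}\ll\log\log n$. Reconciling this loss with the large main term is exactly what the choice of amplitudes, scales, and $t_0$ is engineered to achieve, and is the crux of the proof; once it is in place, the triangle inequality
\bbal
\|u(t_0)\|_{B^1_{\infty,1}}\geq t_0\big\|\tfrac12\pa_x(u_0^2)\big\|_{B^1_{\infty,1}}-\|u_0\|_{B^1_{\infty,1}}-t_0\big\|\pa_x\Lambda^{-2}(\cdots)\big\|_{B^1_{\infty,1}}-\|R(t_0)\|_{B^1_{\infty,1}}
\end{align*}
delivers $\|u(t_0)\|_{B^1_{\infty,1}}\gtrsim\log\log n$.
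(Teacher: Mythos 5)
Your guiding principle is backwards, and the proof collapses at exactly the point you call the crux. The large term you extract from the first Picard iterate, $-t\,u_0\pa_xu_0$, is not genuine growth of the solution: it is the first-order Taylor term of the composition of $u_0$ with the flow of $u$, and the remainder $R(t)$ cancels it instead of being negligible. Quantitatively, for your pair $u_0=a\cos(\mu x)+b\cos(Nx)$ with $a\approx\mu^{-1}(\log\log n)^{-1}$, $b\approx N^{-1}(\log\log n)^{-1}$, the transported high mode is $b\cos\big(N(x-t\,u_0(x))\big)+O(t^2)$, whose expansion in the phase $tNu_0$ reproduces your cross term of size $t\,abN^2=(tNa)\cdot bN$. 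To make this reach $\log\log n$ you need $tNa\geq(\log\log n)^{2}\gg1$, i.e.\ a phase of enormous amplitude; in that regime the term-by-term expansion has already diverged, while the resummed composition still has amplitude $b$ and derivative of size $N(1+O(t\|\pa_xu_0\|_{L^\infty}))$, so its Littlewood--Paley mass stays in $O(1)$ blocks near $N$ and its $B^1_{\infty,1}$ norm stays $\approx bN=(\log\log n)^{-1}$. The structural reason is the paper's Lemma \ref{lem2.2} (the endpoint $r=1$ commutator estimate): writing $\De_j(u\pa_xu)=u\pa_x\De_ju+[u,\De_j]\pa_xu$ and composing with the Lagrangian flow, the transport part does not change $\|\De_ju\|_{L^\infty}$ at all, and the commutator contributes at most $C\|\pa_xu\|_{L^\infty}\|u\|_{B^1_{\infty,1}}$ per unit time. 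Since $B^1_{\infty,1}\hookrightarrow C^{0,1}$ and your datum is small there, $\|\pa_xu\|_{L^\infty}$ stays $\les(\log\log n)^{-1}$ on $[0,1]$, so over $t_0\leq 1/\log n$ the local term can move the critical norm only by $o(1)$, no matter how large the frequency ratio $N/\mu$ is. A transport nonlinearity cannot inflate $B^1_{\infty,1}$ while the Lipschitz norm stays small; your construction keeps the Lipschitz norm small, so it produces no inflation at all.

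The flip side is your step (iii), which is also false and is where the real mechanism hides: $\big\|\pa_x\Lambda^{-2}\big(\tfrac12(\pa_xu_0)^2\big)\big\|_{B^1_{\infty,1}}$ is \emph{not} $O(\|u_0\|^2_{B^1_{\infty,1}})$, because $B^0_{\infty,1}$ is not an algebra. For output frequencies $2^j$ far below the input frequency, $\pa_x\Lambda^{-2}$ costs only a factor $2^{-j}$, which the Besov weight $2^j$ exactly cancels: $2^j\|\De_jE_0\|_{L^\infty}\approx\|\De_j(\pa_xu_{0})^2\|_{L^\infty}$ with $E_0=-\tfrac12\pa_x\Lambda^{-2}(\pa_xu_{0})^2$, so the nonlocal term is not ``one derivative milder'' for high-high-to-low interactions. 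The paper's proof runs entirely on this point: it takes $u_{0,n}=2^{-n}n^{-2/5}\log n\,\cos(2^nx)\big(1+n^{-1/5}S_{n/2}h\big)$ with $h$ a square wave, whose every dyadic block has $\|\De_jh\|_{L^\infty}\approx1$ (Lemma \ref{le-e0}); then $2\sin^2(2^nx)=1-\cos(2^{n+1}x)$ dumps the low-frequency factor $n^{-1/5}S_{n/2}h$ into $(\pa_xu_{0,n})^2$, spread over the $\sim n$ blocks $j\in\mathbb{N}(n)$, giving $\|(\pa_xu_{0,n})^2\|_{B^0_{\infty,1}(\mathbb{N}(n))}\geq c(\log n)^2$ (Lemma \ref{le-e2}) while $\|u_{0,n}\|_{B^1_{\infty,1}}\les n^{-2/5}\log n$. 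The inflation is then read off from the term $t\De_jE_0$ in the Duhamel formula along the flow, with the commutator, the $u^2$ term, and $E\circ\phi-E_0$ all controlled by $C^{0,1}$-smallness, yielding $\|u(t_0)\|_{B^1_{\infty,1}}\gtrsim t_0(\log n)^2\approx\log n$ at $t_0=1/\log n$. So the division of labor is the opposite of the one you assumed: high-high-to-low interactions in the nonlocal term drive the inflation, and high-low interactions in the local term are annihilated by the flow. Without replacing your mechanism by one of this type, the gap in your step (ii) cannot be closed.
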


\begin{remark}
Since the norm inflation implies discontinuous of the data-to-solution map at the trivial function $u_0\equiv0$, Theorem \ref{th1} demonstrates the ill-posedness of the Camassa--Holm equation in $B^1_{\infty,1}(\mathbb{T})$ in the sense of  Hadamard.
\end{remark}
The Cauchy problem for the Novikov equation reads as (see \cite{Guo-Yin,Li1,Li2,Li-Yin1,Li22,Li22-jee,Li-arx,Yan1,Yan2,Ye} and the references therein)
\begin{equation}\label{N}
\begin{cases}
u_t+u^2u_x=-\frac12\Lambda^{-2}u_x^3-\pa_x\Lambda^{-2}\left(\frac32uu^2_x+u^3\right),\\
u(x,t=0)=u_0(x).
\end{cases}
\end{equation}
Home-Wang \cite{Home2008} proved that the Novikov equation  with cubic nonlinearity shares similar properties with the CH equation, such as a Lax pair in matrix form, a bi-Hamiltonian structure, infinitely many conserved quantities and peakon solutions given by the formula $u(x, t)=\sqrt{c}e^{-|x-ct|}$. The local well-posedness of the Novikov equation with initial data in Sobolev spaces and Besov spaces was studied in \cite{H-H,ni,wy,Yan1,Yan2}.
We would like to mention that, for only real-line case, Guo-Liu-Molinet-Yin \cite{Guo-Yin} proved that the Novikov  equation \eqref{N} is ill-posed in $B_{p,r}^{1+1/p}(\mathbb{R})$ with $(p,r)\in[1,\infty]\times(1,\infty]$ by proving the norm inflation. The only left an end-point case $r = 1$ for the Novikov equation in the real-line has been solved in our recent work \cite{Li-arx}. In this paper, we shall consider the torus case and establish the following
\begin{theorem}\label{th2}
For any $n\in \mathbb{Z}^+$ large enough, there exist $u_0$ with
\bbal
\|u_0\|_{B^1_{\infty,1}(\mathbb{T})}\leq \frac{1}{\log\log n}
\end{align*}
such that if we denote by $u\in \mathcal{C}([0,T);H^{\infty}(\mathbb{T}))$, the solution of the period Novikov equation with initial data $u_0$, then
\bbal
\|u(t_0)\|_{B^1_{\infty,1}(\mathbb{T})}\geq {\log\log n}\quad\text{with}\quad t_0\in \left(0,\frac{1}{\log n}\right].
\end{align*}
\end{theorem}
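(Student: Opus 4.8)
The plan is to adapt the real-line norm-inflation argument for the Novikov equation of \cite{Li-arx} to the torus, running parallel to the Camassa--Holm case of Theorem \ref{th1} but with the quadratic convection $u\pa_x u$ replaced by the cubic $u^2\pa_x u=\tfrac13\pa_x(u^3)$. First I would fix a large integer frequency $N=N(n)$ and build $u_0$ as a superposition on $\mathbb{T}$ of a low-frequency \emph{pump} together with modes clustered near $N$, the periodic analogue of the spatially localized packets used on $\R$; the amplitudes are tuned---this is where the iterated logarithms enter---so that $\|u_0\|_{B^1_{\infty,1}}\le 1/\log\log n$. Since the data is smooth, the hypothesis furnishes a solution $u\in\mathcal{C}([0,T);H^\infty(\mathbb{T}))$, so every manipulation below is legitimate on the maximal interval of existence.

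Next I would analyse the short-time evolution through the Duhamel formula $u(t)=u_0-\int_0^t u^2\pa_x u\,\dd s+\int_0^t R(u)\,\dd s$, where $R(u)$ collects the nonlocal terms $-\tfrac12\Lambda^{-2}u_x^3-\pa_x\Lambda^{-2}(\tfrac32 uu_x^2+u^3)$, and split $u$ into its low- and high-frequency parts. The essential point, which I expect to be the main obstacle, is to show that the nonlinearity feeds a band of high frequencies in a phase-coherent way, so that the $L^\infty$-norm of the relevant dyadic blocks---and hence, after the $2^{j}$-weighted $\ell^1$ summation defining $B^1_{\infty,1}$---is driven up to $\log\log n$ by some time $t_0\in(0,1/\log n]$. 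Here one must be careful: the convective part $u^2\pa_x u$ is of transport type and, on its own, tends merely to rotate the modal phases while preserving amplitudes, so the genuine growth has to be extracted from the precise resonant interaction between the pump and the high modes together with the non-transport contribution of $R(u)$; on $\mathbb{T}$ these resonances are rigidly fixed by the integer lattice rather than freely adjustable as on $\R$, which is exactly what makes the periodic case a separate problem. I would quantify the resulting lower bound by projecting onto a single well-chosen dyadic block and tracking the growth of its $L^\infty$-norm along the flow.

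To control the remainder over $[0,t_0]$ I would run a continuity argument anchored at the first time the $B^1_{\infty,1}$-norm reaches $\log\log n$. The stabilising facts are that the Novikov flow conserves $\|u\|_{H^1}^2=\int_{\mathbb{T}}(u^2+u_x^2)\,\dd x$ and that $B^1_{\infty,1}(\mathbb{T})\hookrightarrow H^1(\mathbb{T})$, so the conserved energy stays $\lesssim 1/(\log\log n)^2$ throughout; the inflation is thus a loss of high-frequency phase-coherence in $L^\infty$ rather than a gain of energy, which is what makes it compatible with the short time $1/\log n$---since the amplification rate is controlled by $\|u\|_{B^1_{\infty,1}}\le\log\log n$ while $t_0\le 1/\log n$, no exponential growth is available and the inflation must be algebraic, forced by the inhomogeneous source. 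On the bootstrap interval one has $\int_0^t\|\pa_x(u^2)\|_{L^\infty}\,\dd s\lesssim t(\log\log n)^2\ll 1$, which feeds the Besov transport estimates; combined with the spectral localisation of the solution in the high band, the product and commutator estimates for the remainder then close and keep it subdominant. Finally, combining the lower bound of the second step with the contradiction that would arise if the stopping time exceeded $1/\log n$ yields $t_0\in(0,1/\log n]$ with $\|u(t_0)\|_{B^1_{\infty,1}}\ge\log\log n$, which is the assertion.
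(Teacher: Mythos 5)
Your high-level skeleton does match the paper's: smooth data built from a high-frequency carrier plus a low-frequency component, growth extracted from the nonlocal (non-transport) part of the Novikov nonlinearity rather than from $u^2\partial_x u$, a bootstrap keeping $\|u\|_{C^{0,1}}$ small on $[0,1/\log n]$, and a contradiction argument at $t_0=1/\log n$. However, the proposal never supplies the one ingredient on which the theorem actually rests, and its concrete quantification step is provably unworkable. The step ``projecting onto a single well-chosen dyadic block and tracking the growth of its $L^\infty$-norm along the flow'' cannot yield inflation for any data obeying the hypothesis $\|u_0\|_{B^1_{\infty,1}}\le 1/\log\log n$: writing the equation along the flow, each individual weighted block satisfies $2^j\|\Delta_j u(t)\|_{L^\infty}\le 2^j\|\Delta_j u_0\|_{L^\infty}+\int_0^t\bigl(2^j\|[u,\Delta_j]\partial_x u\cdot u\|_{L^\infty}+2^j\|\Delta_j(\text{nonlocal})\|_{L^\infty}\bigr)\mathrm{d}\tau\lesssim \|u_0\|_{B^1_{\infty,1}}+t\,\|u\|^3_{C^{0,1}}$, and your own bootstrap keeps $\|u\|_{C^{0,1}}\lesssim\|u_0\|_{C^{0,1}}\le 1/\log\log n$ there; so every single block stays of size $O(1/\log\log n)$, nowhere near $\log\log n$. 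At the endpoint $r=1$ the inflation is intrinsically an $\ell^1$ phenomenon: it must be spread over an unbounded number of dyadic blocks, each individually small.

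This is exactly what the paper's construction provides and what your sketch lacks. The data is $v_{0,n}=2^{-n}n^{-1/4}\log n\,\cos(2^nx)\bigl(1+n^{-1/4}f_n\bigr)+n^{-1/4}$ with $f_n=S_{n/2}h$, $h$ the square wave, whose dyadic blocks have unit $L^\infty$ size on the band $\mathbb{N}(n)=\{n/8\le j\le n/4\}$ (Lemma 3.1). The cubic source term $-\tfrac32\partial_x\Lambda^{-2}(u u_x^2)$, evaluated at the data, contains via $\sin^2(2^nx)=\tfrac12(1-\cos(2^{n+1}x))$ the non-oscillating profile $\sim n^{-1}(\log n)^2 f_n$; summing its $\approx n$ blocks on $\mathbb{N}(n)$ gives the key lower bound $\|v_{0,n}(\partial_x v_{0,n})^2\|_{B^0_{\infty,1}(\mathbb{N}(n))}\ge c(\log n)^2$ (Lemma 4.1), whence a solution growth $\gtrsim t(\log n)^2\approx\log n$ at $t=1/\log n$. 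Note also two points where your physical picture is off: the nonlinearity does not ``feed a band of high frequencies'' --- the output of the carrier's self-interaction lands at frequencies $2^{n/8}$--$2^{n/4}$, far \emph{below} the carrier $2^n$; and the constant $n^{-1/4}$ (your ``pump'') is not there to resonate with the high modes but simply because the Novikov nonlinearity $uu_x^2$ is cubic, so one factor of $u$ must be non-oscillating to avoid losing further powers of $2^{-n}$. Your $H^1$-conservation remark is true but plays no role; it cannot substitute for the missing construction and the band-restricted lower bound, without which the proposal remains a plan rather than a proof.
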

\noindent\textbf{Organization of our paper.} In Section \ref{sec2}, we list some notations and known results and recall some Lemmas which will be used in the sequel. In Section \ref{sec3} we present the proof of Theorem \ref{th1} by dividing it into several parts:
(1) Construction of initial data;
(2) Key Estimation for Discontinuity;
(3) The Equation Along the Flow;
(4) Norm inflation.
In Section \ref{sec4}, we present the constructions and estimations of initial data and leave the proof Theorem \ref{th2} to the interested readers.

\section{Preliminaries}\label{sec2}

\quad We define the periodic Fourier transform $\mathcal{F}_{\mathbb{T}}: \mathcal{D}(\mathbb{T})\rightarrow \mathcal{S}(\mathbb{Z})$  as
$$(\mathcal{F} u)(\xi)=\widehat{u}(\xi)=\int_{\mathbb{T}} e^{-\mathrm{i} x \xi} u(x) \mathrm{d} x.$$
and the inverse Fourier transform $\mathcal{F}^{-1}_{\mathbb{T}}: \mathcal{S}(\mathbb{Z}) \rightarrow  \mathcal{D}(\mathbb{T})$  as
$$(\mathcal{F}^{-1}u) (x)=\frac{1}{2 \pi} \sum_{\xi \in \mathbb{Z}} u(\xi) e^{\mathrm{i} x \xi}.$$
We decompose $u \in \mathcal{D}(\mathbb{T})$ on the circle $\mathbb{T}$ into Fourier series, i.e.
$$
u(x)=\frac{1}{2 \pi} \sum_{\xi \in \mathbb{Z}} \widehat{u}(\xi) e^{\mathrm{i} x \xi}.
$$
We are interested in solutions which take values in the Besov space $B_{p, r}^s(\mathbb{T})$. Recall that one way to define this space requires a dyadic partition of unity. Given a smooth bump function $\chi$ supported on the ball of radius $4 / 3$, and equal to 1 on the ball of radius $3 / 4$, we set $\varphi(\xi)=\chi(2^{-1} \xi)-\chi(\xi)$ and $\varphi_j(\xi)=\varphi(2^{-j} \xi)$. Using this partition, we define the periodic dyadic blocks as follows
\begin{align*}
&\Delta_j u=0, \quad \text{ if } \quad j \leq-2, \\
&\Delta_{-1} u=\mathcal{F}_x^{-1} \chi \mathcal{F}_x u=\frac{1}{2 \pi} \sum_{\xi \in \mathbb{Z}} \chi(\xi) \widehat{u}(\xi) e^{\mathrm{i} x \xi}, \\
&\Delta_j u=\mathcal{F}_x^{-1} \varphi(2^{-j} \xi) \mathcal{F}_x u=\frac{1}{2 \pi} \sum_{\xi \in \mathbb{Z}} \varphi_j( \xi)\widehat{u}(\xi) e^{\mathrm{i} x \xi}, \quad \text { if } \quad j \geq 0 .
\end{align*}
The low-frequency cut-off operator $S_j$ is defined as follows
$$
{S}_ju=\sum_{-1\leq k\leq j-1}{\Delta}_ku,\quad \forall j\geq-1.
$$
Therefore, we obtain the Littlewood-Paley decomposition of $u$
$$u=\sum_{j \in \mathbb{Z}} \Delta_j u\quad \text{in}\quad \mathcal{S}^{\prime}(\mathbb{T}).$$
We have the following useful facts
\begin{itemize}
  \item $\Delta_k \Delta_j u \equiv 0, \quad \text { if } \quad|k-j| \geq 2$,
  \item $\Delta_j(S_{k-1} u \Delta_k v) \equiv 0, \quad \text { if } \quad|k-j| \geq 5, \quad \forall u, v \in \mathcal{S}^{\prime}(\mathbb{T})$,
  \item $\|\Delta_j u\|_{L^p} \leq C\|u\|_{L^p} \quad\text{and}\quad\|S_j u\|_{L^p} \leq C\|u\|_{L^p}, \quad \forall p \in[1, \infty]$,
\end{itemize}
where $C$ is a positive constant independent of $j$.

\begin{definition}[Besov space]
Let $s\in\mathbb{R}$ and $(p,r)\in[1, \infty]^2$. The nonhomogeneous Besov space $B^{s}_{p,r}(\mathbb{T})$ is defined by
\begin{align*}
B^{s}_{p,r}(\mathbb{T}):=\Big\{f\in \mathcal{S}'(\mathbb{T}):\;\|f\|_{B^{s}_{p,r}(\mathbb{T})}<\infty\Big\},
\end{align*}
where
\begin{numcases}{\|f\|_{B^{s}_{p,r}(\mathbb{T})}=}
\left(\sum_{j\geq-1}2^{sjr}\|\Delta_jf\|^r_{L^p(\mathbb{T})}\right)^{\fr1r}, &if $1\leq r<\infty$,\nonumber\\
\sup_{j\geq-1}2^{sj}\|\Delta_jf\|_{L^p(\mathbb{T})}, &if $r=\infty$.\nonumber
\end{numcases}
\end{definition}
The operators $\Delta_j$ defined on the periodic domain share many properties with those
on the whole space(see \cite{B}). Particularly, we can derive the Bernstein's inequality and commutator estimate for the periodic functions.
\begin{lemma}\label{s} Let $\alpha \geq 0$ and $1 \leq q \leq p \leq \infty$.
There exists a constant $C>0$ such that
$$
\left\|\Delta_j \partial_x^\alpha f\right\|_{L^p\left(\mathbb{T}\right)} \leq C 2^{\alpha j+j \left(\frac{1}{q}-\frac{1}{p}\right)}\|\Delta_j f\|_{L^q\left(\mathbb{T}\right)}.
$$
\end{lemma}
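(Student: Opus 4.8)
The plan is to reduce the estimate to Young's inequality for periodic convolutions, after writing $\Delta_j\pa_x^\alpha f$ as the convolution of $\Delta_j f$ against an explicit Fourier-multiplier kernel whose $L^r(\mathbb{T})$ norm carries the gain $2^{j(\alpha+1/q-1/p)}$. Here $\pa_x^\alpha$ is read, for real $\alpha\geq0$, as the Fourier multiplier with symbol $(i\xi)^\alpha$. First I would pick an auxiliary bump $\tilde\varphi\in C_c^\infty$ with $\tilde\varphi\equiv 1$ on a neighborhood of $\mathrm{supp}\,\varphi$ (and an analogous $\tilde\chi$ for the block $j=-1$), so that the associated block $\tilde\Delta_j$ satisfies $\tilde\Delta_j\Delta_j=\Delta_j$. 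Then, for $j\geq0$, I can write
\[
\pa_x^\alpha\Delta_j f=\pa_x^\alpha\tilde\Delta_j\Delta_j f=h_{j,\alpha}\star\Delta_j f,\qquad h_{j,\alpha}(x)=\frac{1}{2\pi}\sum_{\xi\in\mathbb{Z}}(i\xi)^\alpha\,\tilde\varphi(2^{-j}\xi)\,e^{ix\xi},
\]
where $\star$ denotes convolution on $\mathbb{T}$. Young's inequality on the torus then gives
\[
\|\pa_x^\alpha\Delta_j f\|_{L^p(\mathbb{T})}\leq\|h_{j,\alpha}\|_{L^r(\mathbb{T})}\,\|\Delta_j f\|_{L^q(\mathbb{T})},\qquad \frac{1}{r}=1-\Big(\frac{1}{q}-\frac{1}{p}\Big),
\]
and the hypothesis $q\leq p$ guarantees $r\in[1,\infty]$. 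Thus everything reduces to proving $\|h_{j,\alpha}\|_{L^r(\mathbb{T})}\leq C\,2^{j(\alpha+1-1/r)}=C\,2^{j(\alpha+1/q-1/p)}$, uniformly in $j$.

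The crux, and the one genuinely torus-specific point, is this kernel estimate. I would relate the periodic kernel to its real-line counterpart by Poisson summation: setting $g_\alpha(x)=\frac{1}{2\pi}\int_{\R}(i\eta)^\alpha\tilde\varphi(\eta)e^{ix\eta}\,\dd\eta$, a Schwartz function, and $g_{\alpha,j}(x)=2^{j(\alpha+1)}g_\alpha(2^jx)$, one obtains $h_{j,\alpha}(x)=\sum_{k\in\mathbb{Z}}g_{\alpha,j}(x+2\pi k)$, the $2\pi$-periodization of $g_{\alpha,j}$. Because $g_\alpha$ is rapidly decreasing and $g_{\alpha,j}$ concentrates at scale $2^{-j}\leq1$ near the origin, the periodization is harmless: the $k=0$ term dominates while the lattice tails are summable uniformly in $j\geq0$, so that $\|h_{j,\alpha}\|_{L^r(\mathbb{T})}\leq C\,\|g_{\alpha,j}\|_{L^r(\R)}$. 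The claimed power of $2^j$ then drops out of the scaling identity $\|g_{\alpha,j}\|_{L^r(\R)}=2^{j(\alpha+1)}2^{-j/r}\|g_\alpha\|_{L^r(\R)}$ together with $1-1/r=1/q-1/p$. The low-frequency block $j=-1$ is treated identically with $\tilde\chi$ in place of $\tilde\varphi$; in fact it is even easier, since there the spectrum is a fixed finite set of integers and $h_{-1,\alpha}$ is simply a trigonometric polynomial with bounded $L^r(\mathbb{T})$ norm.

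I expect the main obstacle to be precisely this uniform control of the periodized kernel. On $\R$ the multiplier estimate is a one-line dilation argument, whereas on $\mathbb{T}$ one must verify that summing the Schwartz kernel over the lattice $2\pi\mathbb{Z}$ does not spoil the $j$-independence of the constant. The rapid decay of $g_\alpha$ is exactly what rescues the argument, and it is worth noting that the restriction to $j\geq-1$ (no arbitrarily low frequencies, in contrast with the real line) is what keeps the periodic estimate clean and renders the constant uniform in $j$.
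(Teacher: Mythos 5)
Your proof is correct. Note, however, that the paper itself does not prove this lemma at all: it simply asserts that the periodic blocks ``share many properties'' with the whole-space ones and cites the book of Bahouri--Chemin--Danchin \cite{B} for the real-line Bernstein inequality, leaving the transference to the torus implicit. What you have written is precisely the missing transference argument, and it is the standard one: factor $\Delta_j=\tilde\Delta_j\Delta_j$ through an auxiliary bump, realize $\partial_x^\alpha\tilde\Delta_j$ as convolution on $\mathbb{T}$ with the kernel $h_{j,\alpha}$, apply Young's inequality with $\frac1r=1-\bigl(\frac1q-\frac1p\bigr)$, and control $\|h_{j,\alpha}\|_{L^r(\mathbb{T})}$ by recognizing it (via Poisson summation) as the $2\pi$-periodization of the dilated Schwartz kernel $2^{j(\alpha+1)}g_\alpha(2^j\cdot)$, whose lattice tails are summable uniformly in $j\ge 0$ thanks to rapid decay. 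Your exponent bookkeeping ($\alpha+1-\frac1r=\alpha+\frac1q-\frac1p$) is right, and your two torus-specific observations are exactly the points worth making explicit: (i) periodization is not $L^r$-bounded in general, so the uniform-in-$j$ constant genuinely relies on the kernel being concentrated at scale $2^{-j}\le 1$; (ii) the block $j=-1$ is harmless on $\mathbb{T}$ because the spectrum is a finite set of integers, so $h_{-1,\alpha}$ is a trigonometric polynomial (this sidesteps the non-smoothness of $(\mathrm{i}\xi)^\alpha$ at $\xi=0$ for fractional $\alpha$, which on the real line would require a separate remark). In short, your argument supplies a self-contained justification for a statement the paper takes on faith, at the modest cost of fixing a branch convention for $(\mathrm{i}\xi)^\alpha$ when $\alpha$ is not an integer.
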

\begin{lemma}\label{lem2.2}
Let $1 \leq r \leq \infty$, $1 \leq p \leq p_{1} \leq \infty$ and $\frac{1}{p_{2}}=\frac{1}{p}-\frac{1}{p_{1}}$. There exists a constant $C$ depending continuously on $p$ and $p_1$ such that
$$
\left\|2^{j}\|[\Delta_{j},v]\pa_x f\|_{L^{p}(\mathbb{T})}\right\|_{\ell^{r}(j\geq-1)} \leq C\left(\|\pa_x v\|_{L^{\infty}(\mathbb{T})}\|f\|_{B_{p, r}^{1}(\mathbb{T})}+\|\pa_x f\|_{L^{p_{2}}(\mathbb{T})}\|\pa_x v\|_{B_{p_1,r}^{0}(\mathbb{T})}\right),
$$
where we denote the standard commutator $[\Delta_j,v]\pa_xf=\Delta_j(v\pa_xf)-v\Delta_j\pa_xf$.
\end{lemma}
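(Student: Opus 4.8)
The plan is to follow the classical commutator argument based on Bony's decomposition, transferred to the periodic setting, reducing the whole-space kernel estimates to $\mathbb{T}$ via \cite{B}. Since adding a constant to $v$ changes neither side of the asserted inequality, I first normalize $v$ to have zero mean. Writing Bony's paraproduct $T_ab=\sum_{j'}S_{j'-1}a\,\Delta_{j'}b$ and remainder $R(a,b)=\sum_{j'}\Delta_{j'}a\,\widetilde{\Delta}_{j'}b$ with $\widetilde{\Delta}_{j'}=\sum_{|k-j'|\le1}\Delta_k$, I decompose
$$[\Delta_j,v]\pa_xf=[\Delta_j,T_v]\pa_xf+\big(\Delta_jT_{\pa_xf}v-T_{\Delta_j\pa_xf}v\big)+\big(\Delta_jR(v,\pa_xf)-R(v,\Delta_j\pa_xf)\big)=:\mathrm{I}_j+\mathrm{II}_j+\mathrm{III}_j,$$
which is an exact identity, and I bound $\big\|2^j\|\cdot\|_{L^p}\big\|_{\ell^r}$ of each piece. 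Only $\mathrm{I}_j$ carries a genuine commutator, and its treatment is the crux; $\mathrm{II}_j$ and $\mathrm{III}_j$ are routine paraproduct and remainder estimates.

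For $\mathrm{I}_j=\sum_{|j'-j|\le4}[\Delta_j,S_{j'-1}v]\Delta_{j'}\pa_xf$ (the frequency localization forces $|j'-j|\le4$), the key ingredient is the pointwise commutator bound $\|[\Delta_j,a]g\|_{L^p}\le C2^{-j}\|\pa_xa\|_{L^\infty}\|g\|_{L^p}$. On $\mathbb{T}$ I would write $\Delta_j$ ($j\ge0$) as convolution against the periodized kernel $h_j(x)=\frac1{2\pi}\sum_{\xi\in\mathbb{Z}}\varphi_j(\xi)e^{\mathrm{i}x\xi}$, so that $[\Delta_j,a]g(x)=\int_{\mathbb{T}}h_j(x-y)\big(a(y)-a(x)\big)g(y)\,\dd y$; inserting the mean-value identity $a(y)-a(x)=(y-x)\int_0^1\pa_xa\big(x+\tau(y-x)\big)\,\dd\tau$ and invoking the first-moment bound $\int_{\mathbb{T}}|z|\,|h_j(z)|\,\dd z\le C2^{-j}$ (uniform in $j\ge0$) together with Young's inequality yields the claim; the single block $j=-1$ is estimated directly. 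Taking $a=S_{j'-1}v$, using $\|\pa_xS_{j'-1}v\|_{L^\infty}\le C\|\pa_xv\|_{L^\infty}$ and $\|\Delta_{j'}\pa_xf\|_{L^p}\le C2^{j'}\|\Delta_{j'}f\|_{L^p}$ (Lemma \ref{s}), and summing the finitely many $j'$, I obtain $\big\|2^j\|\mathrm{I}_j\|_{L^p}\big\|_{\ell^r}\le C\|\pa_xv\|_{L^\infty}\|f\|_{B^1_{p,r}}$.

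The pieces $\mathrm{II}_j$ and $\mathrm{III}_j$ need no commutator gain. For the low-high term $\Delta_jT_{\pa_xf}v$ ($|j'-j|\le4$) and the finite-band remainder $R(v,\Delta_j\pa_xf)$ ($|j'-j|\le2$), Hölder's inequality with $\frac1p=\frac1{p_1}+\frac1{p_2}$, the bound $\|S_{j'-1}\pa_xf\|_{L^{p_2}}\le C\|\pa_xf\|_{L^{p_2}}$, and the trade $2^{j'}\|\Delta_{j'}v\|_{L^{p_1}}\le C\|\Delta_{j'}\pa_xv\|_{L^{p_1}}$ (valid for $j'\ge0$ by Lemma \ref{s}, the mean-free low block being handled by hand) reduce each, after the $\ell^r$ sum, to $C\|\pa_xf\|_{L^{p_2}}\|\pa_xv\|_{B^0_{p_1,r}}$. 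For the high-low tail $T_{\Delta_j\pa_xf}v$ (over $j'\ge j+2$) and $\Delta_jR(v,\pa_xf)$ (over $j'\ge j-2$), the same steps produce a convolution $2^j\|\cdot\|_{L^p}\le C\|\pa_xf\|_{L^{p_2}}\sum_{j'}2^{j-j'}\|\Delta_{j'}\pa_xv\|_{L^{p_1}}$ against the summable weight $(2^{j-j'})$, whence Young's inequality for sequences gives the same bound. Summing the three contributions proves the lemma.

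The main obstacle is the periodic pointwise commutator estimate underlying $\mathrm{I}_j$: one must verify that the torus kernel $h_j$ inherits the first-moment bound $\int_{\mathbb{T}}|z|\,|h_j(z)|\,\dd z\les2^{-j}$ of its whole-space model $2^jh(2^j\cdot)$, uniformly for $j\ge0$. This holds because for $2^j\gtrsim1$ the periodization concentrates at scale $2^{-j}\le1$ inside the fundamental domain and is dominated there by the rapidly decaying Schwartz kernel, so the whole-space commutator machinery transfers to $\mathbb{T}$ as recorded in \cite{B}, the only extra bookkeeping being the harmless separate treatment of the low-frequency block $j=-1$.
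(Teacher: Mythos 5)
Your proposal is correct, and it is essentially the argument the paper itself invokes: the paper gives no proof of Lemma \ref{lem2.2}, asserting instead that the whole-space commutator estimate (Lemma 2.100 of \cite{B}, proved exactly via your Bony decomposition into $\mathrm{I}_j$, $\mathrm{II}_j$, $\mathrm{III}_j$ with the first-moment kernel bound) transfers to $\mathbb{T}$. Your write-up simply carries out that transfer in detail, and your two torus-specific adjustments --- the periodized-kernel moment bound uniform in $j\ge0$, and normalizing $v$ to zero mean so that the low block satisfies $\|\Delta_{-1}v\|_{L^{p_1}}\le C\|\Delta_{-1}\pa_x v\|_{L^{p_1}}$ --- are precisely the points that need checking, so no gap remains.
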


Let us complete this section by proving the simple fact which will be used often in the sequel.
\begin{lemma}\label{cos}
For any $\lambda\in \mathbb{Z}$, we have
\begin{align*}
\mathcal{F}[\cos (\lambda x)](\xi)
&=
\begin{cases}
\pi, & \xi=\pm\lambda, \\
0, & \xi \neq \pm \lambda,
\end{cases}
\end{align*}
and
\begin{align*}
\mathcal{F}[\sin (\lambda x)](\xi)
&=
\begin{cases}
-\mathrm{i}\pi, & \xi=\lambda, \\
\mathrm{i}\pi, & \xi=-\lambda, \\
0, & \xi \neq \pm \lambda.
\end{cases}
\end{align*}
\end{lemma}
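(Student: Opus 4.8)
The plan is to reduce everything to the elementary orthogonality relation on the torus, namely that for any integer $m$,
\begin{align*}
\int_{\mathbb{T}} e^{\mathrm{i}mx}\,\dd x = \begin{cases} 2\pi, & m=0,\\ 0, & m\neq0,\end{cases}
\end{align*}
which is valid precisely because $e^{\mathrm{i}mx}$ is $2\pi$-periodic when $m\in\mathbb{Z}$. First I would rewrite the trigonometric functions through Euler's formula, $\cos(\lambda x)=\tfrac12(e^{\mathrm{i}\lambda x}+e^{-\mathrm{i}\lambda x})$ and $\sin(\lambda x)=\tfrac{1}{2\mathrm{i}}(e^{\mathrm{i}\lambda x}-e^{-\mathrm{i}\lambda x})$, and insert these into the definition $\widehat{u}(\xi)=\int_{\mathbb{T}}e^{-\mathrm{i}x\xi}u(x)\,\dd x$.

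For the cosine this turns the transform into
\begin{align*}
\mathcal{F}[\cos(\lambda x)](\xi)=\frac12\int_{\mathbb{T}}\big(e^{\mathrm{i}(\lambda-\xi)x}+e^{-\mathrm{i}(\lambda+\xi)x}\big)\,\dd x,
\end{align*}
and since $\lambda,\xi\in\mathbb{Z}$ the orthogonality relation applies term by term: the first integral contributes $2\pi$ exactly when $\xi=\lambda$ and the second contributes $2\pi$ exactly when $\xi=-\lambda$, so the sum equals $\pi$ at $\xi=\pm\lambda$ and vanishes otherwise. The sine case is identical, the only changes being the prefactor $\tfrac{1}{2\mathrm{i}}$ and the minus sign between the two exponentials, which produce $\tfrac{1}{2\mathrm{i}}\cdot2\pi=-\mathrm{i}\pi$ at $\xi=\lambda$ and $-\tfrac{1}{2\mathrm{i}}\cdot2\pi=\mathrm{i}\pi$ at $\xi=-\lambda$.

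There is no genuine obstacle here; the computation is routine once the exponentials are separated. The only points requiring a little care are, first, that the hypothesis $\lambda\in\mathbb{Z}$ is exactly what guarantees the integrand is genuinely $2\pi$-periodic so that the orthogonality relation is legitimate, and second, the degenerate case $\lambda=0$, in which the frequencies $\xi=\lambda$ and $\xi=-\lambda$ coincide and both exponentials collapse to the constant $1$; since in the intended application one always has $\lambda\neq0$, I would simply assume $\lambda\neq0$ so that $\pm\lambda$ are distinct and each case is hit by exactly one of the two terms.
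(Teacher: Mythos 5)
Your proof is correct and follows essentially the same route as the paper's: both split $\cos(\lambda x)$ and $\sin(\lambda x)$ via Euler's formula and then apply the orthogonality relation $\int_{\mathbb{T}} e^{\mathrm{i}mx}\,\dd x = 2\pi$ for $m=0$ and $0$ for $m\in\mathbb{Z}\setminus\{0\}$ term by term. Your side remark about the degenerate case is actually a small correction the paper omits: for $\lambda=0$ the stated values fail (the cosine transform is $2\pi$ at $\xi=0$, and the sine transform is $0$), and restricting to $\lambda\neq0$ is harmless since the lemma is only ever applied with $\lambda=2k-1$ or $\lambda=2^n$-type nonzero frequencies.
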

\begin{proof} An obvious computation gives that
\begin{align*}
&\mathcal{F}[\cos (\lambda x)](\xi)
=\frac{1}{2}  \int_{-\pi}^{\pi} \left(e^{-\mathrm{i}(\xi-\lambda) x} + e^{-\mathrm{i}(\xi+\lambda) x}\right) \dd x,\\
&\mathcal{F}[\sin (\lambda x)](\xi)=\frac{\mathrm{i}}{2}  \int_{-\pi}^{\pi}\left(e^{-\mathrm{i}(\xi+\lambda) x}- e^{-\mathrm{i}(\xi-\lambda) x}\right) \dd x,
\end{align*}
which implies the desired result of Lemma \ref{cos}.
\end{proof}
\section{Proof of Theorem \ref{th1}: Camassa--Holm equation}\label{sec3}
In this section, we prove Theorem \ref{th1}.
\subsection{Construction of Initial Data}\label{sec3.1}
\quad From now on, we set
\bbal
n\in 16\mathbb{N}=\left\{16,32,48,\cdots\right\}\quad\text{and}\quad
\mathbb{N}(n)=\left\{k\in \mathbb{N}: \frac{n}8 \leq k\leq \frac{n}4\right\}.
\end{align*}
  We introduce the following new notation which will be used often throughout this paper
  \bbal
  \|f\|_{B^k_{\infty,1}\left(\mathbb{N}(n)\right)}=\sum_{j\in\mathbb{N}(n)}2^{kj}\|\Delta_jf\|_{L^\infty},\quad k\in\{0,1\}.
  \end{align*}
Let
\bbal
h(x)=
\bca
h(x+2\pi), & x\in \R,\\
-\frac12, & x\in(-\pi,0),\\
\frac12, & x\in(0,\pi),\\
0, & x\in\{-\pi,0,\pi\}.
\eca
\end{align*}
It is straightforward to calculate the Fourier series of $h(x)$
\bbal
h(x)= \frac2\pi\sum^{\infty}_{k=1}\frac{\sin[(2k-1)x]}{2k-1}.
\end{align*}
Then we have the following crucial estimation.
\begin{lemma}\label{le-e0}
There exists two positive constants $c_1,c_2$ independent of $n$ such that
\bbal
c_1\leq\|\Delta_jh(x)\|_{L^\infty}\leq c_2,\quad j\in \mathbb{N}(n).
\end{align*}
\end{lemma}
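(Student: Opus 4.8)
The plan is to evaluate $\Delta_j h$ explicitly as a lacunary sine series and then exploit the Fourier localization of $\varphi_j$. Since $\varphi$ is even, the multiplier $\Delta_j$ sends $\sin(mx)$ to $\varphi_j(m)\sin(mx)$, so by the Fourier expansion of $h$ together with Lemma \ref{cos} one obtains
\[
\Delta_j h(x)=\frac{2}{\pi}\sum_{m\ \mathrm{odd},\ m\ge 1}\frac{\varphi_j(m)}{m}\,\sin(mx).
\]
Here $\varphi_j(\xi)=\varphi(2^{-j}\xi)$ is supported in the annulus $\frac34 2^{j}\le|\xi|\le\frac83 2^{j}$ and is identically $1$ on the sub-band $\frac43 2^{j}\le|\xi|\le\frac32 2^{j}$; these two facts drive both halves of the estimate.

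For the upper bound I would simply use the triangle inequality with $|\sin(mx)|\le1$ and $\varphi_j\le 1$, so that $\|\Delta_j h\|_{L^\infty}\le\frac2\pi\sum_{m\ \mathrm{odd},\ \frac34 2^j\le m\le\frac83 2^j}\frac1m$. The band contains $O(2^{j})$ odd integers, each contributing $O(2^{-j})$, so the sum is comparable to $\log\frac{32}{9}$ and is bounded by an absolute constant $c_2$ independent of $j$ (hence of $n$).

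The lower bound is the crux. Because $\Delta_j h$ has only $O(2^{j})$ nonzero coefficients, each of size $O(2^{-j})$, its $L^2$ norm is merely $O(2^{-j/2})$; consequently any Plancherel or $L^2\hookrightarrow L^\infty$ argument degenerates as $j\to\infty$ and cannot yield a uniform lower bound. Instead I would exploit that $\Delta_j h$ is a wave packet concentrated at the jump of $h$ and force constructive interference by evaluating at the single point $x_j=\frac{\pi}{2}2^{-j}$. For every $m$ in the plateau band one then has $mx_j\in[\frac{2\pi}{3},\frac{3\pi}{4}]\subset(0,\pi)$, whence $\sin(mx_j)\ge\frac{\sqrt2}{2}$. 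Since all summands are now nonnegative, discarding the terms outside the plateau gives
\[
\|\Delta_j h\|_{L^\infty}\ge\Delta_j h(x_j)\ge\frac{2}{\pi}\cdot\frac{\sqrt2}{2}\sum_{\substack{m\ \mathrm{odd}\\ \frac43 2^j\le m\le\frac32 2^j}}\frac1m\ \ge\ c_1>0,
\]
because the last sum is bounded below by $\frac12\log\frac98$ up to an $o(1)$ error, again an absolute constant. As neither $c_1$ nor $c_2$ depends on any parameter, they are in particular independent of $n$, which is precisely the assertion for $j\in\mathbb{N}(n)$. The only delicate point is the choice of $x_j$: it must be taken so that $m x_j$ stays inside $(0,\pi)$ and away from its endpoints for the whole plateau band, which is what turns the oscillatory sum into a coherent one.
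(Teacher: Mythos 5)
Your strategy is the same as the paper's: expand $\Delta_j h$ as the lacunary sine series $\frac{2}{\pi}\sum_{m\,\mathrm{odd}}\frac{\varphi_j(m)}{m}\sin(mx)$, get the upper bound from the triangle inequality, and get the lower bound by evaluating at a single well-chosen point $x_j\sim 2^{-j}$ and keeping only the plateau frequencies. The upper bound is fine. However, the lower bound as written has a genuine gap: with your choice $x_j=\frac{\pi}{2}2^{-j}$, the claim that \emph{all summands are nonnegative} is false. The sum runs over the whole support band $\frac34 2^j\le m\le \frac83 2^j$ of $\varphi_j$, not just the plateau $[\frac43 2^j,\frac32 2^j]$, and for $m\in(2^{j+1},\frac83 2^j]$ one has $mx_j\in(\pi,\frac{4\pi}{3}]$, hence $\sin(mx_j)<0$. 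Discarding terms outside the plateau is therefore not a valid lower bound: it requires the discarded sum to be nonnegative, which is exactly what fails. Nor is the discarded negative mass negligible: crudely, $\frac{2}{\pi}\sum_{m\,\mathrm{odd},\,2^{j+1}<m\le \frac83 2^j}\frac{|\sin(mx_j)|}{m}$ can be as large as $\frac{\sqrt3}{2\pi}\log\frac43\approx 0.04$, about three times your retained plateau bound $\frac{\sqrt2}{2\pi}\log\frac98\approx 0.013$, and its true size depends on the unspecified profile of $\chi$ on $(1,\frac43)$. So the asserted inequality simply does not follow from the argument given.

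The fix is precisely the point you flag as ``delicate,'' except that you state the criterion for the wrong band: $x_j$ must keep $mx_j$ inside $(0,\pi)$ for the \emph{whole support band} of $\varphi_j$, not merely for the plateau, which forces $x_j\le\frac{3\pi}{8}2^{-j}$. The paper takes $x_j=2^{-j-2}\pi$, so that $mx_j\in[\frac{3\pi}{16},\frac{2\pi}{3}]\subset(0,\pi)$ for every $m$ in the support; then every summand is indeed nonnegative, restricting to the plateau is legitimate, and on the plateau $\sin(mx_j)\ge\sin\frac{3\pi}{16}$, which yields the uniform constant $c_1$ via $\sum_{\frac43 2^j\le 2k-1\le\frac32 2^j}\frac{1}{2k-1}\approx\frac12\log\frac98$. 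With this single change of evaluation point your proof becomes correct and coincides with the paper's.
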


\begin{proof}
Since $\varphi_j(\xi)$ is symmetric, i.e., $\varphi_j(\xi)=\varphi_j(|\xi|)$, using Lemma \ref{cos}, we have
\bal\label{h}
\Delta_jh(x)
&= \frac{1}{2 \pi} \sum_{\xi \in \mathbb{Z}} \varphi_j(\xi)\widehat{h}(\xi) e^{\mathrm{i} x \xi}\nonumber\\
&= -\frac{\mathrm{i}}{\pi}\sum_{\xi\in\mathbb{Z}}\sum^{\infty}_{k= 1}\varphi_j(\xi)\frac{1}{2k-1}\mathbf{1}_{2k-1}(|\xi|)\mathrm{sign}(\xi)e^{\mathrm{i}x\xi}
\nonumber\\
&=-\frac{\mathrm{i}}{\pi}\sum_{\fr342^{j}\leq |\xi|\leq \fr832^{j}}\sum^{\infty}_{k= 1}\varphi_j(\xi)\frac{1}{2k-1}\mathbf{1}_{2k-1}(|\xi|)\mathrm{sign}(\xi)e^{\mathrm{i}x\xi}
\nonumber\\
&=\frac{2}{\pi}\sum_{\fr342^{j}\leq \xi\leq \fr832^{j}}\sum^{\infty}_{k= 1}\varphi_j(\xi)\frac{1}{2k-1}\mathbf{1}_{2k-1}(\xi)\sin(x\xi),
\end{align}
where $\mathbf{1}_{m}(x)$ is the indicator function, taking a value of 1 if $x=m$ and 0 otherwise.

It follows from \eqref{h}  that
\bbal
c_2\geq\|\Delta_jh(x)\|_{L^\infty}&\geq \frac{2}{\pi}\left|\sum_{\fr342^{j}\leq \xi\leq \fr832^{j}}\sum^{\infty}_{k= 1}\varphi_j(\xi)\frac{1}{2k-1}\mathbf{1}_{2k-1}(\xi)\sin(x\xi)\right|_{x=2^{-j-2}\pi}\\
&\geq \frac{2}{\pi}\sin\frac{3\pi}{16}\sum_{\fr342^{j}\leq 2k-1\leq \fr832^{j}}\varphi_j(2k-1)\frac{1}{2k-1}
\\&\geq \frac{2}{\pi}\sin\frac{3\pi}{16} \sum_{\fr432^{j}\leq 2k-1\leq \fr322^{j} }\frac{1}{2k-1}\thickapprox c_1>0.
\end{align*}
Then we  complete the proof of Lemma \ref{le-e0}.
\end{proof}
Now, we can define the initial data $u_{0,n}$ for the Camassa--Holm equation
\bbal
u_{0,n}(x)&= 2^{-n}n^{-\frac25}\log n\cdot\cos(2^nx)\left(1+n^{-\frac15}f_n(x)\right),
\end{align*}
where we denote the low frequency part of $h(x)$ by
\bal\label{f}
f_n(x):=S_{\frac{n}2}h(x).
\end{align}

\subsection{Key Estimation for Discontinuity}
The following two Lemmas play an important role in the proof of Theorem \ref{th1}.
\begin{lemma}\label{le-e1}
There exists a positive constant $C$ independent of $n$ such that
\bbal
&2^{n}\|u_{0,n}\|_{L^\infty}+\|\pa_xu_{0,n}\|_{L^\infty}
\leq Cn^{-\frac25}\log n,\\
&\|u_{0,n}\|_{B^1_{\infty,1}}\leq C n^{-\frac25}\log n.
\end{align*}
In particular, it holds that
$$\|u_{0,n}\|_{C^{0,1}}=\|u_{0,n}\|_{L^\infty}+\|\pa_xu_{0,n}\|_{L^\infty}\leq C n^{-\frac25}\log n.$$
\end{lemma}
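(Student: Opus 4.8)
The plan is to estimate each of the three quantities directly from the explicit form of $u_{0,n}$, exploiting the frequency localization built into the construction. Writing $u_{0,n}(x)=2^{-n}n^{-2/5}\log n\cdot\cos(2^nx)\big(1+n^{-1/5}f_n(x)\big)$, I observe that $\|u_{0,n}\|_{L^\infty}$ is immediate: since $|\cos(2^nx)|\leq 1$ and $f_n=S_{n/2}h$ is a low-frequency truncation of $h$, I bound $\|f_n\|_{L^\infty}\leq C\|h\|_{L^\infty}\leq C$ using the $L^\infty$-boundedness of $S_j$ listed among the Littlewood--Paley facts. Hence $\|u_{0,n}\|_{L^\infty}\leq C2^{-n}n^{-2/5}\log n$, so $2^n\|u_{0,n}\|_{L^\infty}\leq Cn^{-2/5}\log n$, which is the first half of the first bound.

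For the derivative, I would apply the product rule:
\bbal
\pa_xu_{0,n}(x)=2^{-n}n^{-\frac25}\log n\Big(-2^n\sin(2^nx)\big(1+n^{-\frac15}f_n\big)+n^{-\frac15}\cos(2^nx)\pa_xf_n\Big).
\end{align*}
The first term contributes the factor $2^n$, which cancels against the prefactor $2^{-n}$ to leave something of size $n^{-2/5}\log n$. The second term carries $\pa_xf_n$; here I would use Bernstein's inequality (Lemma \ref{s}) to estimate $\|\pa_xf_n\|_{L^\infty}=\|\pa_xS_{n/2}h\|_{L^\infty}\leq C2^{n/2}\|h\|_{L^\infty}$, or more carefully sum $\|\Delta_j\pa_xh\|_{L^\infty}\leq C2^j\|\Delta_jh\|_{L^\infty}\leq C2^j c_2$ over $-1\leq j\leq n/2-1$ to get $\|\pa_xf_n\|_{L^\infty}\leq C2^{n/2}$. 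Multiplying by the prefactor $2^{-n}n^{-2/5-1/5}\log n$ gives a contribution of order $2^{-n/2}n^{-3/5}\log n$, which is far smaller than $n^{-2/5}\log n$ for large $n$. Thus $\|\pa_xu_{0,n}\|_{L^\infty}\leq Cn^{-2/5}\log n$, completing the first displayed bound and, combined with the $L^\infty$ estimate, the stated $C^{0,1}$ bound.

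The genuinely delicate estimate is the Besov norm $\|u_{0,n}\|_{B^1_{\infty,1}}=\sum_{j\geq-1}2^j\|\Delta_ju_{0,n}\|_{L^\infty}$, since here I cannot afford to lose the cancellation of $2^{-n}$ against the high-frequency weight $2^j$. The key point is that $u_{0,n}$ is spectrally concentrated near frequency $2^n$: the factor $\cos(2^nx)$ has frequency exactly $\pm2^n$, while $f_n=S_{n/2}h$ has spectrum contained in $|\xi|\lesssim 2^{n/2}$, so the product $\cos(2^nx)f_n(x)$ has its spectrum confined to the annulus $|\xi|\in[2^n-2^{n/2},2^n+2^{n/2}]$ by the convolution structure of the Fourier transform. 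Consequently $\Delta_ju_{0,n}\equiv0$ unless $2^j\approx 2^n$, i.e. only finitely many values of $j$ (those with $|j-n|\leq 2$, say) contribute to the sum. For each such $j$ I bound $\|\Delta_ju_{0,n}\|_{L^\infty}\leq C\|u_{0,n}\|_{L^\infty}\leq C2^{-n}n^{-2/5}\log n$, and the weight $2^j\approx 2^n$ cancels the $2^{-n}$, leaving $\sum_{|j-n|\leq 2}2^j\|\Delta_ju_{0,n}\|_{L^\infty}\leq Cn^{-2/5}\log n$. The main obstacle, then, is making the spectral-support argument rigorous: I must check carefully that $1+n^{-1/5}f_n$ has all its frequencies in $|\xi|\lesssim 2^{n/2}\ll 2^n$ and invoke the support property of convolution so that multiplying by $\cos(2^nx)$ merely shifts this low-frequency band to sit around $\pm2^n$, thereby guaranteeing that only $O(1)$ dyadic blocks are nonzero and that the $B^1_{\infty,1}$ sum does not accumulate extra logarithmic or polynomial factors.
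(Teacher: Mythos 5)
Your proposal is correct and follows essentially the same route as the paper: compute $\pa_xu_{0,n}$ explicitly, use Bernstein and the $L^\infty$-boundedness of $S_j$ to control $\|f_n\|_{L^\infty}$ and $\|\pa_xf_n\|_{L^\infty}$ (with the $2^{-n}$ prefactor absorbing the $2^{n/2}$ loss), and then exploit the fact that $u_{0,n}$ is spectrally supported in an annulus $\{|\xi|\sim 2^n\}$, so only the blocks $j\in\{n-1,n,n+1\}$ contribute and $\|u_{0,n}\|_{B^1_{\infty,1}}\leq C2^n\|u_{0,n}\|_{L^\infty}$. The paper states this spectral localization more tersely, but the argument is the same.
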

\begin{proof}
By the construction of $u_{0,n}$, one has
\bal\label{u}
\pa_xu_{0,n}&=-n^{-\frac25}\log n\cdot \left[\sin(2^nx)\left(1+n^{-\frac15}f_n\right)-2^{-n}n^{-\frac15}\cos(2^nx)\pa_xf_n\right].
\end{align}
Using Bernstein's inequality, we have
\bbal
2^{n}\|u_{0,n}\|_{L^\infty}+\|\pa_xu_{0,n}\|_{L^\infty}
&\leq Cn^{-\frac25}\log n\left(1+n^{-\frac15}\|f_n\|_{L^\infty}+2^{-n}n^{-\frac15}\|\pa_xf_n\|_{L^\infty}\right)\leq Cn^{-\frac25}\log n.
\end{align*}
The Fourier transform of $\sin(2^nx)$, $\sin(2^{n}x)f_n$ and $\cos(2^{n}x)\pa_xf_n$  are supported in annulus $\{\xi:|\xi|\sim 2^n\}$, then we have
\bbal
\De_j\left(u_{0,n}\right)=0,\quad \text{for}\quad j\notin\{n-1,n,n+1\},
\end{align*}
which tells us that
\bbal
\|u_{0,n}\|_{B^1_{\infty,1}}= \sum_{j\geq-1} 2^j\|\Delta_ju_{0,n}\|_{L^\infty}\leq C2^n\|u_{0,n}\|_{L^\infty}
&\leq C n^{-\frac25}\log n.
\end{align*}
This completes the proof of Lemma \ref{le-e1}.
\end{proof}
\begin{lemma}\label{le-e2}
There exists a positive constant $c$ independent of $n$ such that
\bbal
\left\|\left(\pa_xu_{0,n}\right)^2\right\|_{B^0_{\infty,1}\left(\mathbb{N}(n)\right)}\geq c(\log n)^2, \qquad n\gg1.
\end{align*}
\end{lemma}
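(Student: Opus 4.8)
The plan is to extract from the square $(\pa_x u_{0,n})^2$ a genuinely low-frequency profile living on the band $\mathbb{N}(n)$, whose size on each dyadic block is then bounded below by Lemma \ref{le-e0}. Starting from \eqref{u}, I would write $\pa_x u_{0,n}=-n^{-\frac25}\log n\,(P-Q)$ with $P=\sin(2^nx)\big(1+n^{-\frac15}f_n\big)$ and $Q=2^{-n}n^{-\frac15}\cos(2^nx)\pa_xf_n$, so that $(\pa_x u_{0,n})^2=n^{-\frac45}(\log n)^2\big(P^2-2PQ+Q^2\big)$. The first step is to discard everything that does not reach the band $\mathbb{N}(n)$. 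The cross term $PQ$ carries $\sin(2^nx)\cos(2^nx)=\frac12\sin(2^{n+1}x)$ multiplied by functions spectrally supported in $|\xi|\les 2^{n/2}$, hence is localized near frequency $2^{n+1}$ and is annihilated by $\Delta_j$ for $j\in\mathbb{N}(n)$ (for such $j$ one has $2^j\les 2^{n/4}\ll 2^{n+1}$); the term $Q^2$ does leak low frequencies, but its prefactor $2^{-2n}$ renders its contribution exponentially small against the target $(\log n)^2$.

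The heart of the matter is $P^2$. Using $\sin^2(2^nx)=\frac12-\frac12\cos(2^{n+1}x)$, I would split
\[
P^2=\frac12\big(1+n^{-\frac15}f_n\big)^2-\frac12\cos(2^{n+1}x)\big(1+n^{-\frac15}f_n\big)^2,
\]
the second summand being again supported near frequency $2^{n+1}$ and thus invisible to $\Delta_j$ on $\mathbb{N}(n)$. Expanding the first summand as $\frac12+n^{-\frac15}f_n+\frac12 n^{-\frac25}f_n^2$, the constant is killed by $\Delta_j$, and since $f_n=S_{\frac n2}h$ coincides with $h$ on all frequencies below $2^{n/2}$---in particular on the whole band $\mathbb{N}(n)$---one has $\Delta_jf_n=\Delta_jh$ there. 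Thus the linear term reproduces exactly $n^{-\frac15}\Delta_jh$, to which Lemma \ref{le-e0} directly applies.

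The quantitative bound then comes from a reverse-triangle estimate on each block. For $j\in\mathbb{N}(n)$,
\[
\|\Delta_jP^2\|_{L^\infty}\ge n^{-\frac15}\|\Delta_jh\|_{L^\infty}-\frac12 n^{-\frac25}\|\Delta_j(f_n^2)\|_{L^\infty}\ge c_1n^{-\frac15}-Cn^{-\frac25},
\]
where the correction is controlled by $\|\Delta_j(f_n^2)\|_{L^\infty}\le C\|f_n\|_{L^\infty}^2\le C\|h\|_{L^\infty}^2\le C$, using the uniform $L^\infty$-boundedness of $\Delta_j$ and $S_{\frac n2}$ recorded in Section \ref{sec2}. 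For $n$ large the first term dominates, giving $\|\Delta_jP^2\|_{L^\infty}\ge\frac{c_1}{2}n^{-\frac15}$. Since $\mathbb{N}(n)$ contains $\gtrsim n$ integers, summing over the band and reinstating the prefactor yields
\[
\big\|(\pa_x u_{0,n})^2\big\|_{B^0_{\infty,1}(\mathbb{N}(n))}\ge n^{-\frac45}(\log n)^2\cdot\frac{c_1}{2}n^{-\frac15}\cdot cn=c(\log n)^2,
\]
in which the powers of $n$ cancel exactly; this cancellation is precisely what pins down the exponents $-\frac25$ and $-\frac15$ built into $u_{0,n}$.

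The hard part is not any single inequality but the bookkeeping of frequency supports: one must be certain that (i) every oscillatory piece genuinely sits near $2^{n+1}$ and is therefore erased by $\Delta_j$ on $\mathbb{N}(n)$, and (ii) the quadratic self-interaction $f_n^2$, which does inject energy into $\mathbb{N}(n)$, is truly lower order and cannot conspire to cancel the linear term---both handled above, the latter through the reverse triangle inequality and the gap between $n^{-\frac15}$ and $n^{-\frac25}$. The final gain from $O(1)$ per block to $(\log n)^2$ rests entirely on the block count $|\mathbb{N}(n)|\approx n/8$ together with the uniform per-block lower bound of Lemma \ref{le-e0}.
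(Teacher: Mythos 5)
Your proposal is correct and follows essentially the same route as the paper's proof: the same three-term expansion of $(\pa_x u_{0,n})^2$, the observation that the cross term and the $\cos(2^{n+1}x)$-modulated pieces are invisible to $\Delta_j$ on $\mathbb{N}(n)$, the exponential smallness of the $Q^2$ term, and the lower bound via Lemma \ref{le-e0} against the $n^{-\frac25}f_n^2$ correction. The only (immaterial) difference is that you apply the reverse triangle inequality block by block before summing, whereas the paper applies it once to the summed $B^0_{\infty,1}(\mathbb{N}(n))$ norms.
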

\begin{proof} Due to \eqref{u}, one has
\bbal
n^{\frac45}(\log n)^{-2}\cdot (\pa_xu_{0,n})^2&=I_1-I_2+I_3,
\end{align*}
where
\bbal
I_1&=\sin^2(2^nx)\left(1+n^{-\frac15}f_n\right)^2,\\
I_2&=2^{-n}n^{-\frac15}\sin(2^{n+1}x)\pa_xf_n\left(1+n^{-\frac15}f_n\right),\\
I_3&=2^{-2n}n^{-\frac25}\cos^2(2^nx)\left(\pa_xf_n\right)^2.
\end{align*}
Notice that the Fourier transform of $I_2$ is supported in annulus $\{\xi:|\xi|\sim 2^n\}$, it follows that
$$\Delta_j\left(I_2\right)=0\quad\text{for}\quad j\in \mathbb{N}(n),$$
which gives directly that
\bal\label{li1}
\left\|I_2\right\|_{B^0_{\infty,1}\left(\mathbb{N}(n)\right)}=0.
\end{align}
Using the following rough estimate
\bbal
\left\|\cos^2(2^nx)\left(\pa_xf_n\right)^2\right\|_{B^0_{\infty,1}\left(\mathbb{N}(n)\right)}\leq Cn\|\pa_xf_n\|^2_{L^\infty}\leq Cn2^{n},
\end{align*}
we obtain
\bal\label{li2}
\left\|I_3\right\|_{B^0_{\infty,1}\left(\mathbb{N}(n)\right)}\leq Cn^{\frac35}2^{-n}.
\end{align}
To estimate the first term $I_1$, by the simple equality $2\sin^2(a)=1-\cos(2a)$, we rewrite it as
\bbal
I_1&=\frac{1}{2}\left(1-\cos(2^{n+1}x)\right)-\frac{1}{2}n^{-\frac25}\cos(2^{n+1}x)(f_n)^2-n^{-\frac15}\cos(2^{n+1}x)f_n\\
&\quad+n^{-\frac15} f_n+\frac{1}{2}n^{-\frac25}f^2_n.
\end{align*}
Notice that
$$\Delta_j\left(1-\cos(2^{n+1}x)\right)=\Delta_j\left(\cos(2^{n+1}x)f_n\right)=\Delta_j\left(\cos(2^{n+1}x)(f_n)^2\right)=0 \quad\text{for}\quad j\in \mathbb{N}(n),$$
then we have
\bal\label{y}
\left\|I_1\right\|_{B^0_{\infty,1}\left(\mathbb{N}(n)\right)}
&\geq n^{-\frac15}
\left\|f_n\right\|_{B^0_{\infty,1}\left(\mathbb{N}(n)\right)}-\frac{1}{2}n^{-\frac25}\left\|f^2_n\right\|_{B^0_{\infty,1}\left(\mathbb{N}(n)\right)}.
\end{align}
For the first term, using Lemma \ref{le-e0} yields
\bal\label{y1}
&\left\|f_n\right\|_{B^0_{\infty,1}\left(\mathbb{N}(n)\right)}=\sum_{j\in \mathbb{N}(n)} \|\Delta_jf_n\|_{L^\infty}=\sum_{j\in \mathbb{N}(n)} \|\Delta_jh\|_{L^\infty}\approx n.
\end{align}
For the second term, we have
\bal\label{y2}
&\left\|f^2_n\right\|_{B^0_{\infty,1}\left(\mathbb{N}(n)\right)}\leq Cn\|f_n\|^2_{L^\infty}\leq Cn.
\end{align}
Inserting \eqref{y1}-\eqref{y2} into \eqref{y}, then we have for large $n$ enough
\bal\label{li3}
\left\|I_1\right\|_{B^0_{\infty,1}\left(\mathbb{N}(n)\right)}
&\geq cn^{\frac45}.
\end{align}
Thus, combining \eqref{li1}-\eqref{li2} and \eqref{li3}, we deduce that for large $n$ enough
\bbal
&n^{\frac45}(\log n)^{-2}\cdot \left\|(\pa_xu_{0})^2\right\|_{B^0_{\infty,1}\left(\mathbb{N}(n)\right)}\geq cn^{\fr45},
\end{align*}
which is nothing but the desired result of Lemma \ref{le-e2}.
\end{proof}
\subsection{The Equation Along the Flow}\label{sec3.2}

Given a Lipschitz velocity field $u$, we may solve the following ODE to find the flow induced by $u$:
\begin{align}\label{ode}
\quad\begin{cases}
\frac{\dd}{\dd t}\phi(t,x)=u(t,\phi(t,x)),\\
\phi(0,x)=x,
\end{cases}
\end{align}
which is equivalent to the integral form
\bal\label{n}
\phi(t,x)=x+\int^t_0u(\tau,\phi(\tau,x))\dd \tau.
\end{align}
Considering the equation
\begin{align}\label{pde}
\quad\begin{cases}
\pa_tv+u\pa_xv=P,\\
v(0,x)=v_0(x),
\end{cases}
\end{align}
then, we get from \eqref{pde} that
\bbal
\pa_t(\De_jv)+u\pa_x\De_jv&=R_j+\Delta_jP,
\end{align*}
with $R_j=[u,\De_j]\pa_xv=u\De_j\pa_xv-\Delta_j(u\pa_xv)$.

Due to \eqref{ode}, then
\bbal
\frac{\dd}{\dd t}\left((\De_jv)\circ\phi\right)&=R_j\circ\phi+\Delta_jP\circ\phi,
\end{align*}
which means that
\bal\label{l6}
\De_jv\circ\phi=\De_jv_0+\int^t_0R_j\circ\phi\dd \tau+\int^t_0\Delta_jP\circ\phi\dd \tau.
\end{align}
\subsection{Norm Inflation}\label{sec3.3}
Following the proof of Lemma 3.26 in \cite{B}, we can obtain
$$\|u(t)\|_{C^{0,1}(\mathbb{T})}\leq \|u_{0,n}\|_{C^{0,1}(\mathbb{T})}\exp\left(\widetilde{C}\int_0^t \|\pa_xu(\tau)\|_{L^{\infty}(\mathbb{T})}\mathrm{d}\tau\right),$$
which implies that for all $t\in\left(0,\min\left\{1,1/(2\widetilde{C}\|u_{0,n}\|_{C^{0,1}})\right\}\right]$
\begin{align*}
&\|u(t)\|_{C^{0,1}}\leq C\|u_{0,n}\|_{C^{0,1}}.
\end{align*}
For $n\gg1$, using Lemma \ref{le-e1}, we have for $t\in[0,1]$
\bbal
\|u\|_{C^{0,1}}\leq C\|u_{0,n}\|_{C^{0,1}}\leq Cn^{-\frac25}\log n.
\end{align*}
To prove Theorem \ref{th1}, it suffices to show that there exists $t_0\in\left(0,\frac{1}{\log n}\right]$ such that
\bal\label{m}
\|u(t_0,\cdot)\|_{B^1_{\infty,1}}\geq \log\log n.
\end{align}
We prove \eqref{m} by contradiction. If \eqref{m} were not true, then
\bal\label{m1}
\sup_{t\in\left(0,\frac{1}{\log n}\right]}\|u(t,\cdot)\|_{B^1_{\infty,1}}< \log\log n.
\end{align}
We divide the proof into two steps.

{\bf Step 1: Lower bounds for $(\De_ju)\circ \phi$}

Now we consider the equation along the Lagrangian flow-map associated to $u$.
Utilizing \eqref{l6} to \eqref{CH} yields
\bbal
(\De_ju)\circ \phi&=\De_ju_{0,n}+\int^t_0R^1_j\circ \phi\dd \tau +\int^t_0\De_jF\circ \phi\dd \tau
\\&\quad +\int^t_0\big(\De_jE\circ \phi-\De_jE_0\big)\dd \tau+t\De_jE_0,
\end{align*}
where
\bbal
&R^1_j=[u,\De_j]\pa_xu,
\quad\quad F=-\pa_x\Lambda^{-2}u^2, \\
&E=-\frac12\pa_x\Lambda^{-2}(\pa_xu)^2\quad\text{with}\quad E_0=-\frac12\pa_x\Lambda^{-2}(\pa_xu_{0,n})^2.
\end{align*}
Due to Lemma \ref{le-e2}, we deduce
\bal\label{g1}
\sum_{j\in \mathbb{N}(n)}2^j\|\De_jE_0\|_{L^{\infty}}
&\approx \sum_{j\in \mathbb{N}(n)}\|\De_j\pa_xE_0\|_{L^{\infty}}
\geq c\sum_{j\in \mathbb{N}(n)}\|\De_j(\pa_xu_{0,n})^2\|_{L^{\infty}}
\geq c(\log n)^2.
\end{align}
Notice that the fact
$\|f(t,\phi(t,\cdot))\|_{L^\infty}= \|f(t,\cdot)\|_{L^\infty},$ using the commutator estimate from Lemma \ref{lem2.2}, we have
\bal\label{g2}
\sum_{j\geq -1}2^j\|R^1_j\circ \phi\|_{L^{\infty}}&\leq C\sum_{j\geq -1}2^j\|R^1_j\|_{L^{\infty}}
\leq C\|\pa_xu\|_{L^\infty}\|u\|_{B^1_{\infty,1}}
\leq C n^{-\frac25}(\log n)^2.
\end{align}
By the fundamental theorem of calculus in the time variable, we have
\bbal
\|u(t)-u_{0,n}\|_{L^\infty}&\leq\int^t_0\|\pa_\tau u\|_{L^\infty} \dd\tau
\nonumber\\
&\les\int^t_0\|u\pa_xu\|_{L^\infty}\dd \tau
+ \int^t_0\left\|\pa_x\Lambda^{-2}\left(u^2+\fr12(\pa_xu)^2\right)\right\|_{L^\infty}\dd \tau
\nonumber\\
&\les\int^t_0\|u\|^2_{C^{0,1}}\dd \tau\nonumber\\
&\les t\|u_{0,n}\|^2_{C^{0,1}}.
\end{align*}
Then, we have for $t\in\left(0,\frac{1}{\log n}\right]$
\bbal
2^j\|\De_jF\circ \phi\|_{L^{\infty}}&\leq C2^j\|\De_jF\|_{L^{\infty}}\leq C\|u\|_{L^\infty}^2
\leq C\left(\|u_{0,n}\|_{L^\infty}+\|u_{0,n}\|^2_{C^{0,1}}\right)^2
\leq Cn^{-\frac85}(\log n)^4,
\end{align*}
which implies
\bal\label{g3}
\sum_{j\in \mathbb{N}(n)}2^j\|\De_jF\circ \phi\|_{L^{\infty}}\leq C n^{-\frac35}(\log n)^4.
\end{align}
Combining \eqref{g1}-\eqref{g3} and using Lemmas \ref{le-e1}-\ref{le-e2} yields
\bbal
&\quad\sum_{j\in \mathbb{N}(n)}2^j\|(\De_ju)\circ \phi\|_{L^{\infty}}\\
&\geq t\sum_{j\in \mathbb{N}(n)}2^j\|\De_jE_0\|_{L^{\infty}}-\sum_{j\in \mathbb{N}(n)}2^j\|\De_jE\circ \phi-\De_jE_0\|_{L^{\infty}}-
Cn^{-\frac25}(\log n)^4
-C\|u_{0,n}\|_{B^1_{\infty,1}}
\\&\geq ct(\log n)^2-\sum_{j\in \mathbb{N}(n)}2^j\|\De_jE\circ \phi-\De_jE_0\|_{L^{\infty}}-
Cn^{-\frac25}(\log n)^4.
\end{align*}
{\bf Step 2: Upper bounds for $\De_jE\circ \phi-\De_jE_0$}

By easy computations, we can see that
\begin{align}\label{E}
\quad\begin{cases}
\pa_tE+u\pa_xE=G,\\
E(0,x)=E_0=-\frac12\pa_x\Lambda^{-2}(\pa_xu_{0,n})^2,
\end{cases}
\end{align}
where
\bbal
G=&\frac13u^3-\frac12u\Lambda^{-2}(\pa_xu)^2
-\Lambda^{-2}\left(\frac13u^3-
\frac12u(\pa_xu)^2-\pa_x\Big(\pa_xu\Lambda^{-2}
(u^2+\frac12(\pa_xu)^2)\Big)\right).
\end{align*}
Utilizing \eqref{l6} to \eqref{E} yields
\bbal
\De_jE\circ \phi-\De_jE_0=\int^t_0[u,\De_j]\pa_xE\circ \phi\dd \tau +\int^t_0\De_jG\circ \phi\dd \tau.
\end{align*}
Using the commutator estimate from Lemma \ref{lem2.2}, one has
\bbal
2^j\|[u,\De_j]\pa_xE\|_{L^{\infty}}\leq C(\|\pa_xu\|_{L^\infty}\|E\|_{B^1_{\infty,\infty}}
+\|\pa_xE\|_{L^\infty}\|u\|_{B^1_{\infty,\infty}})\leq C\|u\|^3_{C^{0,1}}
\end{align*}
and
\bbal
2^j\|\De_jG\|_{L^{\infty}}\leq C \|u\|^3_{C^{0,1}}.
\end{align*}
Then, we have
\bbal
2^j\|\De_jE\circ \phi-\De_jE_0\|_{L^{\infty}}\leq C\|u\|^3_{C^{0,1}}\leq C\|u_{0,n}\|^3_{C^{0,1}}\leq Cn^{-\fr65}(\log n)^3,
\end{align*}
which leads to
\bbal
\sum_{j\in \mathbb{N}(n)}2^j\|\De_jE\circ \phi-\De_jE_0\|_{L^{\infty}}
\leq Cn^{-\fr15}(\log n)^3.
\end{align*}
Combining Step 1 and Step 2, then for $t=\frac{1}{\log n}$, we obtain for $n\gg1$
\bbal
\|u(t)\|_{B^1_{\infty,1}}&\geq \|u(t)\|_{B^1_{\infty,1}(\mathbb{N}(n))}\geq C\sum_{j\in \mathbb{N}(n)}2^j\|(\De_ju)\circ \phi\|_{L^{\infty}}
\\&\geq c t(\log n)^2-Cn^{-\frac15}(\log n)^3-
Cn^{-\frac25}(\log n)^4\\
&\geq \log\log n,
\end{align*}
which contradicts the hypothesis \eqref{m1}.

In conclusion, we obtain the norm inflation and hence the ill-posedness of the CH equation. Thus, Theorem \ref{th1} is proved.

\section{Proof of Theorem \ref{th2}: Novikov equation}\label{sec4}
For the Novikov equation, we have to modify the construction of initial data as follows
\bal\label{vv}
v_{0,n}(x)&=2^{-n}n^{-\frac14}\log n\cdot \cos(2^nx)\left(1+n^{-\frac14}f_n(x)\right)+n^{-\frac14},
\end{align}
where $f_n$ is defined by \eqref{f}.

Then we have
\begin{lemma}\label{le-e1v}
There exists some positive constants $C$ and $c$ independent of $n$ such that for $n\gg1$
\bbal
&2^{n}\|v_{0,n}\|_{L^\infty}+\|\pa_xv_{0,n}\|_{L^\infty}
\leq Cn^{-\frac14}\log n,\\
&\|v_{0,n}\|_{B^1_{\infty,1}}\leq C n^{-\frac14}\log n,\\
&\left\|v_{0,n}\left(\pa_xv_{0,n}\right)^2\right\|_{B^0_{\infty,1}\left(\mathbb{N}(n)\right)}\geq c(\log n)^2.
\end{align*}
\end{lemma}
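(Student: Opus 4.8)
The plan is to establish the three bounds of Lemma \ref{le-e1v} in close parallel with Lemmas \ref{le-e1} and \ref{le-e2}, the only genuinely new feature being the additive constant $n^{-1/4}$ in \eqref{vv}. To organize the argument I would first split
\[
v_{0,n}=w_n+n^{-\frac14},\qquad w_n:=2^{-n}n^{-\frac14}\log n\cdot\cos(2^nx)\big(1+n^{-\frac14}f_n\big),
\]
so that $w_n$ is the Novikov analogue of $u_{0,n}$ (with $n^{-2/5}$ replaced by $n^{-1/4}$ and $n^{-1/5}$ by $n^{-1/4}$), while the constant carries the zeroth Fourier mode only. Throughout I would write $a:=n^{-1/4}\log n$ for the amplitude.

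For the first two estimates I would differentiate \eqref{vv}: since the constant is annihilated by $\pa_x$, the derivative $\pa_xv_{0,n}=\pa_xw_n$ has exactly the form \eqref{u} with amplitude $a$ in place of $n^{-2/5}\log n$. Bernstein's inequality (Lemma \ref{s}), together with $\|f_n\|_{L^\infty}\les1$ and $\|\pa_xf_n\|_{L^\infty}\les2^{n/2}$, then yields $\|\pa_xv_{0,n}\|_{L^\infty}\les n^{-1/4}\log n$ and $2^n\|w_n\|_{L^\infty}\les n^{-1/4}\log n$ just as in Lemma \ref{le-e1}; the constant contributes only $n^{-1/4}\le n^{-1/4}\log n$ to $\|v_{0,n}\|_{L^\infty}$ and nothing to the derivative, so it is harmless. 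For the Besov bound, $w_n$ is spectrally localized in $\{|\xi|\sim2^n\}$, hence $\De_jw_n=0$ for $j\notin\{n-1,n,n+1\}$, while the constant lies in the $\De_{-1}$ block; summing gives $\|v_{0,n}\|_{B^1_{\infty,1}}\les 2^n\|w_n\|_{L^\infty}+n^{-1/4}\les n^{-1/4}\log n$.

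The heart of the matter is the cubic lower bound. In analogy with Lemma \ref{le-e2} I would write $a^{-2}(\pa_xv_{0,n})^2=J_1-J_2+J_3$, where $J_1=\sin^2(2^nx)(1+n^{-1/4}f_n)^2$, $J_2=2^{-n}n^{-1/4}\sin(2^{n+1}x)\pa_xf_n(1+n^{-1/4}f_n)$ and $J_3=2^{-2n}n^{-1/2}\cos^2(2^nx)(\pa_xf_n)^2$, and then multiply by $v_{0,n}=w_n+n^{-1/4}$. The key observation is that the single resonant contribution on $\mathbb{N}(n)$ arises by pairing the constant $n^{-1/4}$ with the low-frequency cross term of $J_1$: using $\sin^2(2^nx)=\tfrac12-\tfrac12\cos(2^{n+1}x)$ and $(1+n^{-1/4}f_n)^2=1+2n^{-1/4}f_n+n^{-1/2}f_n^2$, that cross term equals $n^{-1/4}f_n$, so the leading piece of $v_{0,n}(\pa_xv_{0,n})^2$ on $\mathbb{N}(n)$ is $a^2\,n^{-1/2}f_n=n^{-1}(\log n)^2f_n$. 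By \eqref{y1}, $\|f_n\|_{B^0_{\infty,1}(\mathbb{N}(n))}\approx n$, whence this piece has $B^0_{\infty,1}(\mathbb{N}(n))$-norm $\approx(\log n)^2$.

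It then remains to show every other term is negligible on $\mathbb{N}(n)$, and this bookkeeping is the main obstacle. I would split the leftovers by two principles. First, any product carrying an odd power of the $2^n$-oscillation has Fourier support in an annulus at distance at least $2^n-2^{n/2}$ from the origin; since $\mathbb{N}(n)\subset[n/8,n/4]$ and $n/4<n/2\ll n$, every such term—in particular all of $a^2 w_n(J_1-J_2+J_3)$ and the contribution $-a^2 n^{-1/4}J_2$—is annihilated by $\De_j$ for $j\in\mathbb{N}(n)$, exactly as $I_2$ was in \eqref{li1}. Second, the genuinely low-frequency remainders, namely $a^2 n^{-1/4}$ times the $\tfrac12 n^{-1/2}f_n^2$ part of $J_1$ and times $J_3$, are controlled using $\|f_n^2\|_{B^0_{\infty,1}(\mathbb{N}(n))}\les n$ (as in \eqref{y2}) and the crude $2^{-2n}$ factor, giving bounds of order $n^{-1/4}(\log n)^2$ and smaller. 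These cannot cancel the main term, so $\|v_{0,n}(\pa_xv_{0,n})^2\|_{B^0_{\infty,1}(\mathbb{N}(n))}\ge c(\log n)^2$ for $n\gg1$. The delicate point is purely combinatorial—correctly tracking the Fourier support and the power of $n^{-1/4}$ attached to each of the several triple products—rather than analytic, since all the estimates needed are already furnished by Lemmas \ref{s} and \ref{le-e0} and the computations \eqref{y1}--\eqref{y2}.
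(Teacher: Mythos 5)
Your proposal is correct and takes essentially the same route as the paper: the paper's own proof merely records $\pa_xv_{0,n}$, isolates in \eqref{v1} the main term $n^{-\frac34}\log^2 n\cdot\sin^2(2^nx)\left(1+n^{-\frac14}f_n\right)^2$ --- which is exactly your $n^{-\frac14}a^2J_1$, i.e.\ the constant part of $v_{0,n}$ paired with the $J_1$ part of $(\pa_xv_{0,n})^2$ --- and then invokes ``identical reasoning'' to Lemmas \ref{le-e1}--\ref{le-e2} (frequency-support annihilation on $\mathbb{N}(n)$, the lower bound \eqref{y1} for $f_n$, and the crude bounds \eqref{y2} for $f_n^2$ and $(\pa_xf_n)^2$), all of which you carry out explicitly. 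One remark: by estimating $2^n\|w_n\|_{L^\infty}$ rather than $2^n\|v_{0,n}\|_{L^\infty}$ you have tacitly corrected the lemma's first inequality, which as literally stated cannot hold (the additive constant gives $2^n\|v_{0,n}\|_{L^\infty}\geq 2^n n^{-\frac14}$); the intended claim, which both you and the subsequent norm-inflation argument actually use, concerns only the oscillating part and the norm $\|v_{0,n}\|_{C^{0,1}}$.
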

\begin{proof} Due to \eqref{vv}, one has
\bbal
\pa_xv_{0,n}&=-n^{-\frac14}\log n\cdot \left[\sin(2^nx)\left(1+n^{-\frac14}f_n\right)-2^{-n}n^{-\frac14}\cos(2^nx)\pa_xf_n\right],
\end{align*}
which gives that
\bal\label{v1}
v_{0,n}\left(\pa_xv_{0,n}\right)^2&=n^{-\frac34}\log^2 n\cdot \sin^2(2^nx)\left(1+n^{-\frac14}f_n\right)^2+\text{Remaining Terms}.
\end{align}
By identical reasoning to Lemma \ref{le-e1}, we complete the proof of Lemma \ref{le-e1v}.
\end{proof}
With the aid of Lemma \ref{le-e1v}, we can prove Theorem \ref{th2} by repeating the above procedure of subsections \ref{sec3.2} and \ref{sec3.3}. Since the process is standard, we skip the details here and refer to the line case in \cite{Li-arx}.
\section*{Acknowledgements}
J. Li is supported by the National Natural Science Foundation of China (11801090 and 12161004) and Jiangxi Provincial Natural Science Foundation (20212BAB211004 and 20224BAB201008). Y. Yu is supported by the National Natural Science Foundation of China (12101011). W. Zhu is supported by the National Natural Science Foundation of China (12201118) and Guangdong
Basic and Applied Basic Research Foundation (2021A1515111018).

\section*{Declarations}
\noindent\textbf{Data Availability} No data was used for the research described in the article.

\vspace*{1em}
\noindent\textbf{Conflict of interest}
The authors declare that they have no conflict of interest.

\end{document}